\newcommand{\R}[0]{\mathbb R}
\newcommand{\Ds}[0]{\mathcal D}
\newtheorem{Th}{Theorem}[section]
\newtheorem{Lemma}{Lemma}[section]
\newtheorem{Prop}[Lemma]{Proposition}
\begin{document}

\title[Nowhere-differentiability of the solution map]{Nowhere-differentiability of the solution map of 2D Euler equations on bounded spatial domain}

\author[Hasan Inci and  Y. Charles Li]{Hasan Inci\\
Ko\c{c} \"Universitesi Fen Fak\"ultesi, Rumelifeneri Yolu\\
34450 Sar{\i}yer \.{I}stanbul T\"urkiye\\   \vspace{0.25in} 
        {\it Email: } hinci@ku.edu.tr\\   
         Y. Charles Li\\
        Department of Mathematics, University of Missouri, Columbia, MO 65211, USA \\
{\it Email:} liyan@missouri.edu}

\subjclass{76, 35}

\keywords{Nowhere-differentiability, nowhere locally uniformly continuous, solution map, Euler equations}
\date{}

\begin{abstract}
We consider the incompressible 2D Euler equations on bounded spatial domain $S$, and study the 
solution map on the Sobolev spaces $H^k(S)$ ($k > 2$). Through an elaborate geometric 
construction, we show that for any $T >0$, the time $T$ solution map $u_0 \mapsto u(T)$ is nowhere locally uniformly continuous and nowhere Fr\'echet differentiable.
\end{abstract}

\maketitle

\tableofcontents

\section{Introduction}\label{section_introduction}

The initial value problem for the incompressible 2D Euler equations on bounded spatial domain $S$
is given by
\begin{equation}
u_t + (u \cdot \nabla) u = -\nabla p , \quad 
\operatorname{div} u = 0 , \quad u(0) = u_0 ,
\label{euler}
\end{equation}
under the slip boundary condition on the boundary of $S$, 
where $u:\R \times S \to \R^2$ is the velocity vector field of the flow and $p:\R \times S \to \R$ is the pressure field. Typical bounded spatial domains are the 2D torus (in which case, spatially periodic boundary condition is enforced) and a periodic section of the channel flow (in which case, stream-wise periodic boundary condition is enforced). For convenience of presentation, from now on we 
will use the 2D torus $\mathbb T^2$ to represent the bounded spatial domains.
The initial value problem \eqref{euler} is globally well posed in $H^k_\sigma(\mathbb T^2;\R^2)$ for $k > 2$ \cite{ebin_marsden} \cite{wolibner,holder}. Here we denote by $H^k_\sigma(\mathbb T^2;\R^2)$ the divergence free vector fields on $\mathbb T^2$ of Sobolev class $H^k$. The system \eqref{euler} is invariant under the scaling 
\[
\lambda u(\lambda t), \lambda^2 p(\lambda t)
\]
for $\lambda > 0$. For each $T > 0$, denote by $\Phi_T$ the solution map
\[
 \Phi_T: H^k_\sigma(\mathbb T^2;\R^2) \to H^k_\sigma(\mathbb T^2;\R^2),\quad u_0 \mapsto u(T) 
\]
mapping the initial value $u_0$ to the value of the solution at time $T$. $\Phi_T $ is a continuous map. Our main result is

\begin{Th}\label{th_nonuniform}
The solution map
\[
 \Phi_T: H^k_\sigma(\mathbb T^2;\R^2) \to H^k_\sigma(\mathbb T^2;\R^2)
\]
is nowhere locally uniformly continuous and nowhere Fr\'echet differentiable.
\end{Th}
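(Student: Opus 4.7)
The plan is to prove both assertions of Theorem~\ref{th_nonuniform} through a single construction. Given any $u^\star \in H^k_\sigma(\mathbb T^2;\R^2)$ and any $\varepsilon > 0$, I would produce two sequences $(u_n),(v_n)$ in the ball $B_\varepsilon(u^\star) \subset H^k_\sigma$ such that $u_n,v_n \to u^\star$ in $H^k$, $\|u_n - v_n\|_{H^k}\to 0$, and
\[
\liminf_{n \to \infty}\|\Phi_T(u_n) - \Phi_T(v_n)\|_{H^k} \;\geq\; c \;>\; 0.
\]
This immediately yields that $\Phi_T$ is not uniformly continuous on any neighborhood of $u^\star$, hence nowhere locally uniformly continuous. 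It also precludes Fr\'echet differentiability of $\Phi_T$ at $u^\star$: if $\Phi_T$ admitted a Fr\'echet derivative $L$ at $u^\star$, the expansion
\[
\Phi_T(u_n) - \Phi_T(v_n) \;=\; L(u_n - v_n) \;+\; o(\|u_n - u^\star\|_{H^k}) \;+\; o(\|v_n - u^\star\|_{H^k})
\]
would force $\|\Phi_T(u_n)-\Phi_T(v_n)\|_{H^k}\to 0$, contradicting the lower bound. Since $u^\star$ and $\varepsilon$ are arbitrary, both ``nowhere'' assertions follow.

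The construction relies on the 2D vorticity formulation: the scalar $\omega := \partial_1 u_2 - \partial_2 u_1$ is passively transported along the volume-preserving particle flow $\varphi^u_T$, so that $\omega(T) = \omega_0\circ(\varphi^u_T)^{-1}$, and $u$ is recovered from $\omega$ via Biot--Savart. I would fix a small disk $D(x_0,r)\subset\mathbb T^2$ in which the background flow map $\varphi^{u^\star}_T$ is essentially affine, and install a divergence-free, small-amplitude, highly oscillatory disturbance $w_n$ supported in $D(x_0,r)$, whose vorticity has schematic form $n^{-\beta}\chi(x)\cos(nx_1)$ with $\beta$ chosen so that $\|w_n\|_{H^k} \to 0$. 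I would then set $u_n := u^\star + w_n$ and $v_n := u^\star + \tau_{h_n}w_n$, where $\tau_{h_n}$ denotes translation by a small vector $h_n$ tuned to decay to zero more slowly than the oscillation scale $1/n$. This ensures $\|u_n-v_n\|_{H^k} \to 0$ while the two perturbations remain macroscopically resolved at frequency $n$. At time $T$, the transported vorticities $\omega^{(1)}_n \circ (\varphi^{u_n}_T)^{-1}$ and $\omega^{(2)}_n \circ (\varphi^{v_n}_T)^{-1}$ are carried into essentially different positions, and since a spatial shift of magnitude $\gtrsim 1/n$ applied to an $n$-oscillating function yields an $H^{k-1}$ difference of non-vanishing norm (and hence, after Biot--Savart, an $H^k$-velocity difference of non-vanishing norm), the lower bound holds. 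The scaling symmetry $\lambda u(\lambda t)$ then reduces arbitrary $T>0$ to a convenient reference time.

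The principal difficulty is the fine calibration of the three scales, the frequency $n$, the amplitude $n^{-\beta}$, and the shift $h_n$, coupled so that simultaneously (i) the perturbation lies in $B_\varepsilon(u^\star)$, (ii) the difference $\|u_n-v_n\|_{H^k}$ tends to zero, and (iii) an $O(1)$ $H^k$ separation persists at time $T$. The last requires a Gronwall-type argument in the Lagrangian picture to show that $\varphi^{u_n}_T$ stays close to $\varphi^{u^\star}_T$ in a norm strong enough to prevent smearing of the high-frequency pattern over the interval $[0,T]$, while also being quantitatively sensitive to the small initial translation. Engineering the geometric footprint of this initial translation so that it survives the transport as an $O(1)$ $H^k$ contribution at time $T$ is precisely the elaborate geometric construction alluded to in the abstract, and I expect it to be the main technical hurdle of the argument.
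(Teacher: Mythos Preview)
Your plan contains a fatal internal inconsistency. You ask for sequences $u_n,v_n\to u^\star$ in $H^k$ together with
\[
\liminf_{n\to\infty}\|\Phi_T(u_n)-\Phi_T(v_n)\|_{H^k}\geq c>0.
\]
But the paper states (and it is classical) that $\Phi_T$ is \emph{continuous} on $H^k_\sigma$. Continuity alone forces $\Phi_T(u_n),\Phi_T(v_n)\to\Phi_T(u^\star)$, hence $\|\Phi_T(u_n)-\Phi_T(v_n)\|_{H^k}\to 0$. No choice of oscillation scale, amplitude, or shift can produce such sequences; the construction you sketch is impossible as stated. One can also see this at the vorticity level: if $\|w_n\|_{H^k}\to 0$ then the perturbation vorticities $\omega_n$ vanish in $H^{k-1}$, and since composition with a bounded family of diffeomorphisms is bounded on $H^{k-1}$, the transported perturbation vorticities also vanish in $H^{k-1}$; the background part $\omega^\star\circ(\varphi^{u_n}_T)^{-1}-\omega^\star\circ(\varphi^{v_n}_T)^{-1}$ likewise tends to zero. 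So the $O(1)$ separation you need cannot survive.

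The paper's proof avoids this trap by \emph{not} letting the sequences converge to the base point. Around a smooth $u_\bullet$ it takes $u_0^{(n)}=u_\bullet+v_n$ and $\tilde u_0^{(n)}=u_\bullet+v_n+\tfrac{1}{n}w_\ast$, where the ``pulse'' $v_n$ has \emph{fixed} norm $\|v_n\|_{H^k}=R/2$ but support shrinking to a single point $x^\ast$, and $w_\ast$ is a fixed direction with $(d_{u_\bullet}\exp(w_\ast))(x^\ast)\neq 0$. The smoothness of the Lagrangian exponential map (not a Gronwall argument in Eulerian variables) then shows the two flow maps differ at $x^\ast$ by more than the transported support radius, so the images of $\omega_n$ under the two flows are \emph{disjointly supported}; this yields
\[
\limsup_{n\to\infty}\|\Phi(u_0^{(n)})-\Phi(\tilde u_0^{(n)})\|_{H^k}\geq C_\ast R
\]
with $C_\ast$ independent of $R$. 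It is precisely this \emph{linear-in-$R$} lower bound, valid for all small $R$, that kills Fr\'echet differentiability: a derivative at $u_0$ would make the remainder $o(R)$, contradicting $C_\ast R$. Your differentiability argument, by contrast, relied on $u_n,v_n\to u^\star$ to kill the remainder terms, which is exactly the hypothesis that cannot coexist with the lower bound. If you want to salvage your high-frequency/translation mechanism, you must drop the requirement $u_n,v_n\to u^\star$, keep the perturbations at a fixed positive $H^k$ size $\sim R$, and aim for a lower bound proportional to $R$; but then you will also need a replacement for the shrinking-support/disjointness trick, since two translated copies of a fixed-support oscillatory bump will overlap and the clean norm separation is lost.
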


The physical significance of the theorem can be briefly summarized as follows: In nonlinear 
chaotic dynamics, an important measure is the (maximal) Liapunov exponent 
which is the log of the norm of the derivative of the solution map. A positive Liapunov exponent 
is an indicator of chaotic dynamics. The norm of the derivative of the solution map characterizes 
the maximal rate of the amplification of perturbations. A positive Liapunov exponent implies that 
the maximal rate of the amplification of perturbations is exponential - sensitive dependence on initial data. For the Euler equations of fluids, our theorem states that the derivative of the solution map nowhere exists. The common way 
of such a non-existence is that the norm of the derivative of the solution map is infinite. Thus 
the maximal rate of the amplification of perturbations to Euler equations is infinite - rough dependence on initial data \cite{Li17}. 

The theorem holds when $\mathbb T^2$ is replaced with other bounded domains. 
In \cite{himonas}, a sequence of explicit solutions is constructed to show that the solution map 
is not uniformly continuous on the sequence. In \cite{Li17}, explicit solutions are constructed 
to show that the solution map is not differentiable along these solutions.
Theorem \ref{th_nonuniform} in the case where the spatial domain is the whole space $\R^d$ ($d=2,3$) was proven in \cite{euler}. The bounded spatial domain case is a challenge. In the current paper, we are able to succeed in 2D. The 3D case is still open. 
In contrast to the whole-space case, there are some difficulties in the bounded spatial domain case. In the whole-space case, one has the advantage of dealing with compact-support initial condition of the base-solution, and the interaction of such initial condition with the added ``pulses'' can be eliminated by putting the 
``pulses" far away. Such an arrangement is not possible in the bounded spatial domain case. Let us sketch briefly the strategy of the current proof.
The equations \eqref{euler} have a rich geometric structure. It is well known that one can formulate \eqref{euler} as an ODE in Lagrangian coordinates. More precisely, consider a solution $u$ of \eqref{euler}, and introduce its flow map $\varphi$ as
\[
 \varphi_t = u \circ \varphi,\quad \varphi(0)=\mbox{id}
\]
where $\mbox{id}$ is the identity map. It turns out that \eqref{euler} is equivalent to a second order ODE
\[
 \varphi_{tt}=F(\varphi,\varphi_t) .
\]
In particular, we have a smooth dependence in Lagrangian coordinates, i.e.
\begin{equation}
 u_0 \mapsto \varphi(T)
\label{smdep}
\end{equation}
is smooth. This smooth dependence is the first ingredient. The second ingredient is the Cauchy theorem on vorticity which demonstrates the vorticity's property of being ``frozen'' into the flow \cite{MP94}. In 2D case, it has the simple form
\[
 \omega(T)=\omega_0 \circ \varphi(T)^{-1}
\]
where $\omega(T)$ and $\omega_0$ are the vorticities at times $T$ and $0$ respectively. In order to establish a nonuniform-continuity, we construct $\omega_0$ and $\tilde \omega_0$ which differ slightly but produce a considerable difference for the corresponding $\omega(T)$ and $\tilde \omega(T)$. To achieve that, we need some control over $\varphi(T)$ which can be obtained through the smooth dependence in Lagrangian coordinates \eqref{smdep}.

\section{A geometric Lagrangian formulation of Euler equations}\label{section_geometry}

The concepts of this section were already used in the first local well posedness results for \eqref{euler}, see \cite{lichtenstein,gunter}. They became very popular through \cite{arnold} and subsequently \cite{ebin_marsden}.
Assume that we have a solution $u=(u_1,\ldots,u_d)$ to the Euler equation
\begin{equation}
 u_t + (u \cdot \nabla) u = -\nabla p ,
\label{geuler}
\end{equation}
where $d=2, 3$.
Taking the divergence, we end up with
\[
 -\Delta p = \sum_{i,j=1}^d \partial_i u_j \partial_j u_i .
\]
Solving for $-\nabla p$ gives
\[
 -\nabla p = \Delta^{-1} \nabla \sum_{i,j=1}^d \partial_i u_j \partial_j u_i .
\]
Since $\Delta^{-1}$ is defined on functions with vanishing mean, this makes perfectly sense. Taking the $t$ derivative of $\varphi_t=u \circ \varphi$ gives
\[
 \varphi_{tt}=(u_t + (u \cdot \nabla) u) \circ \varphi = -\nabla p \circ \varphi .
\]
Or  replacing $-\nabla p$, we get
\begin{eqnarray}
 \varphi_{tt} &=& \left(\Delta^{-1} \nabla \sum_{i,j=1}^d \partial_i ((\varphi_t)_j \circ \varphi^{-1}) \cdot \partial_j ((\varphi_t)_i \circ \varphi^{-1})\right) \circ \varphi \nonumber \\
& =:& F(\varphi,\varphi_t) . \label{analytic_ode}
\end{eqnarray}
The right functional space for $\varphi$ is $\Ds^k(\mathbb T^d)$, the group of orientation preserving diffeomorphisms of Sobolev class $H^k$. It turns out that $F(\varphi,\varphi_t)$ is analytic on these spaces -- for details see \cite{ebin_marsden,composition,lagrangian}. By solving \eqref{analytic_ode} with initial values $\varphi(0)=\mbox{id}, \varphi_t(0)=u_0$ up to time $T=1$, we get an analytic exponential map
\[
 \exp:U \subset H^k_\sigma(\mathbb T^d;\R^d) \to \Ds^k(\mathbb T^d),\quad u_0 \mapsto \varphi(1)
\]
which gives a complete description of the solutions to \eqref{geuler}. For more details on exponential maps, see \cite{lang}.

\section{Nowhere-uniform continuity of the solution map}

The vorticity of $u(t)$ in the 2D case is the scalar
\[
 \omega(t):=\partial_1 u_2(t) - \partial_1 u_1(t).
\]
By the Biot Savart law, we have for divergence free $u$
\[
 ||\nabla u||_{H^{k-1}} \leq C ||\omega||_{H^{k-1}}
\]
for some $C > 0$. Moreover, the vorticity is ``frozen'' into the fluid flow in the sense that
\begin{equation}\label{vorticity}
 \omega(t) \circ \varphi(t)=\omega_0,\quad \forall t
\end{equation}
where $\varphi$ is the flow map of $u$ ($\varphi_t=u \circ \varphi, \varphi(0)=\mbox{id}$) and $\omega_0$ is the initial vorticity \cite{MP94}. Because of the scaling $\lambda u(\lambda t)$, it will be enough to establish Theorem \ref{th_nonuniform} for the case $T=1$ to get the same conclusion for the full range $T > 0$. More precisely, if we denote by $\Phi$ the $T=1$ solution map, then
\[
 \Phi_T(u_0)=\frac{1}{T} \Phi(T \cdot u_0).
\]
\begin{Prop}\label{prop_nonuniform}
Let $\Phi=\left. \Phi_T \right|_{T=1}$ be the time-1 solution map. Then
\[
 \Phi: H^k_\sigma(\mathbb T^2;\R^2) \to H^k_\sigma(\mathbb T^2;\R^2),
\]
is nowhere locally uniformly continuous.
\end{Prop}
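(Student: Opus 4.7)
The plan is to fix an arbitrary base point $u_0^*\in H^k_\sigma(\mathbb T^2;\R^2)$ together with a radius $\rho>0$, and to construct sequences $u_0^{(n)},\tilde u_0^{(n)}\in B_\rho(u_0^*)$ with $\|u_0^{(n)}-\tilde u_0^{(n)}\|_{H^k}\to 0$ while $\liminf_n\|\Phi(u_0^{(n)})-\Phi(\tilde u_0^{(n)})\|_{H^k}>0$. The first move is to pass to the vorticity formulation: since the spatial mean of $u$ is preserved by the 2D Euler flow on $\mathbb T^2$ and Biot--Savart together with elliptic regularity gives $\|u\|_{H^k}\simeq|\overline u|+\|\omega\|_{H^{k-1}}$, the task reduces to producing initial vorticities $\omega_0^{(n)},\tilde\omega_0^{(n)}$ close in $H^{k-1}$ whose time-$1$ transported vorticities, via \eqref{vorticity}, stay apart in $H^{k-1}$.

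Let $\Psi^{(n)}:=(\varphi^{(n)}(1))^{-1}$ and $\tilde\Psi^{(n)}:=(\tilde\varphi^{(n)}(1))^{-1}$. The transport identity \eqref{vorticity} yields
\begin{equation*}
\omega^{(n)}(1)-\tilde\omega^{(n)}(1)=\underbrace{\bigl[\omega_0^{(n)}-\tilde\omega_0^{(n)}\bigr]\circ\Psi^{(n)}}_{I_1}+\underbrace{\tilde\omega_0^{(n)}\circ\Psi^{(n)}-\tilde\omega_0^{(n)}\circ\tilde\Psi^{(n)}}_{I_2}.
\end{equation*}
Standard composition estimates on $H^{k-1}$ give $\|I_1\|_{H^{k-1}}\to 0$ provided $\|\omega_0^{(n)}-\tilde\omega_0^{(n)}\|_{H^{k-1}}\to 0$. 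The real work is to engineer $I_2$, a fixed function composed with two nearby diffeomorphisms, to stay of order $\Theta(1)$ in $H^{k-1}$; this is precisely where the non-uniform continuity of the composition operator on $H^{k-1}$ is the decisive mechanism.

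Concretely I would take
\begin{equation*}
u_0^{(n)}=u_0^*+\alpha A_n+\tfrac{c}{n}B,\qquad \tilde u_0^{(n)}=u_0^*+\alpha A_n+\tfrac{c}{n}\tilde B,
\end{equation*}
where $\alpha,c>0$ are small fixed constants, $A_n$ is a divergence-free field whose vorticity is a single-frequency bump $\mathrm{curl}(A_n)(x)=\tfrac{1}{n^{k-1}}\cos(nx_1)\chi(x-y_0)$ for a fixed smooth cut-off $\chi$ (hence $\|\mathrm{curl}\,A_n\|_{H^{k-1}}=O(1)$ uniformly in $n$), and $B,\tilde B$ are fixed smooth divergence-free, mean-zero fields chosen so that the linearisation of the exponential map at $u_0^*+\alpha A_n$ sends $B-\tilde B$ to a flow-displacement that is non-zero at the base-flow image $z_0:=\varphi^{u_0^*+\alpha A_n}(1)(y_0)$. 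Then $\|u_0^{(n)}-\tilde u_0^{(n)}\|_{H^k}=(c/n)\|B-\tilde B\|_{H^k}\to 0$, while the common high-frequency carrier $\alpha\,\mathrm{curl}(A_n)$ sitting inside both $\omega_0^{(n)}$ and $\tilde\omega_0^{(n)}$ is advected by two Lagrangian flows differing by an $O(1/n)$ spatial displacement at $z_0$, which is exactly the scale needed to generate an $H^{k-1}$-macroscopic $I_2$.

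The decisive ingredient is the Lagrangian smoothness of Section~\ref{section_geometry}: since $u_0\mapsto\varphi(1)$ is analytic via \eqref{analytic_ode}, its first variation is a bounded linear operator that, for the above choice of $B,\tilde B$, yields
\begin{equation*}
\Psi^{(n)}(z_0)-\tilde\Psi^{(n)}(z_0)=\tfrac{c}{n}e+o(1/n),\qquad e\in\R^2\setminus\{0\}.
\end{equation*}
Inserting this into the carrier and using $\cos A-\cos B=-2\sin\tfrac{A+B}{2}\sin\tfrac{A-B}{2}$, the leading pointwise contribution to $I_2$ near $x=z_0$ is $-\tfrac{2\alpha}{n^{k-1}}\sin(n\,(\Psi^{(n)}(x))_1)\sin(ce_1/2)\chi(\Psi^{(n)}(x)-y_0)$, whose $H^{k-1}$-norm is $\sim \alpha|\sin(ce_1/2)|$ since the $n^{k-1}$ gained from $(k-1)$ derivatives of $\sin(n\,(\Psi^{(n)})_1)$ exactly cancels the $n^{-(k-1)}$ amplitude. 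The two main obstacles I anticipate are: (i) establishing the quantitative lower bound $|\Psi^{(n)}(z_0)-\tilde\Psi^{(n)}(z_0)|\gtrsim c/n$ \emph{uniformly} in $n$, which amounts to showing that the first variation of the exponential map at $u_0^*+\alpha A_n$ does not degenerate along $B-\tilde B$ as $n\to\infty$---a solvability argument for the linearisation of \eqref{analytic_ode} that must not weaken with $n$; and (ii) upgrading the pointwise computation of $I_2$ to a rigorous $H^{k-1}$ lower bound while carefully controlling the Jacobian and chain-rule contributions from $\Psi^{(n)}$ and $\tilde\Psi^{(n)}$.
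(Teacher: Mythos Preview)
Your strategy---a fixed-support, high-frequency carrier $A_n$ whose phase is shifted by the $O(1/n)$ Lagrangian displacement between the two flows---is a genuinely different route from the paper's. The paper takes instead a carrier $v_n$ with \emph{shrinking} support of radius $r_n\sim 1/n$ and fixed $H^k$ norm $R/2$; once the pointwise displacement $|\varphi^{(n)}(x^\ast)-\tilde\varphi^{(n)}(x^\ast)|>m/(2n)$ exceeds twice the transported support radius $C_2 r_n=m/(8n)$, the two transported vorticities $\omega_n\circ(\varphi^{(n)})^{-1}$ and $\omega_n\circ(\tilde\varphi^{(n)})^{-1}$ have \emph{disjoint} supports, and the $H^{k-1}$ lower bound follows by Pythagoras. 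This sidesteps your obstacle~(ii) entirely: no trigonometric identity, no chain-rule bookkeeping for $\partial^\alpha(\cos(n\Psi_1^{(n)}))$, and no need for the displacement to be approximately constant on a neighborhood (you only state it at the single point $z_0$, which is not enough for an $H^{k-1}$ lower bound). For obstacle~(i) the paper Taylor-expands $\exp$ about the \emph{fixed} base point $u_\bullet$ rather than about $u_\bullet+v_n$, so the leading displacement $\frac{1}{n}d_{u_\bullet}\exp(w_\ast)(x^\ast)$ is manifestly $n$-independent and the cross terms $d^2_{u_\bullet+\cdots}\exp(v_n,\frac{1}{n}w_\ast)$ are dominated by choosing the ball radius $R_\ast$ small via~\eqref{r_ast}; the same trick would dispose of your obstacle~(i).

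There is, however, a genuine gap in your treatment of the background vorticity. Your $I_2$ contains not only the carrier $\alpha\,\mathrm{curl}(A_n)$ but also $\omega_0^\ast=\mathrm{curl}\,u_0^\ast$, and you need $\|\omega_0^\ast\circ\Psi^{(n)}-\omega_0^\ast\circ\tilde\Psi^{(n)}\|_{H^{k-1}}\to 0$. Since $A_n$ does not converge in $H^k$, the diffeomorphisms $\Psi^{(n)}$ do not converge in $\mathcal D^k$ (only their difference does), so mere continuity of composition is unavailable; one needs the Lipschitz-type estimate $\|f\circ\psi_1-f\circ\psi_2\|_{H^{k-1}}\lesssim\|f\|_{H^k}\|\psi_1-\psi_2\|_{H^{k-1}}$, which requires $\omega_0^\ast\in H^k$, i.e.\ one extra derivative on the base point. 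The paper secures this by first proving (Lemma~\ref{lemma_dense}) that the $C^\infty$ vector fields $u_\bullet$ with $d_{u_\bullet}\exp\neq 0$ form a dense set $S$, and then running the construction only at $u_\bullet\in S$; nowhere local uniform continuity at an arbitrary $u_0$ follows because every ball around $u_0$ contains such a $u_\bullet$. You should build in the same density reduction, which simultaneously furnishes the non-degeneracy you assume when choosing $B,\tilde B$.
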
 

Before proving this proposition, we prove the following technical lemma which tells us that the exponential map is not locally constant.

\begin{Lemma}\label{lemma_dense}
There is a dense subset $S \subseteq H^k_\sigma(\mathbb T^2;\R^2)$ with $S \subseteq C^\infty$ such that
\[
 d_{u_\bullet} \exp \neq 0,\quad \forall u_\bullet \in S
\]
where
\[
 d_{u_\bullet} \exp: H^k_\sigma(\mathbb T^2;\R^2) \to H^k(\mathbb T^2;\R^2)
\]
is the differential of $exp:H^k_\sigma \to \Ds^k$ at $u_\bullet$.
\end{Lemma}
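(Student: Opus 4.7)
The plan is to exploit the analyticity of $\exp$ from Section \ref{section_geometry} together with the observation that $d_0\exp \neq 0$, to show via an identity principle that the ``critical set''
\[
Z:=\{u_\bullet \in H^k_\sigma(\mathbb T^2;\R^2):\ d_{u_\bullet}\exp=0\}
\]
has empty interior in $H^k_\sigma$, and then to cut its open dense complement down to smooth divergence-free fields.

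First I would verify that $d_0\exp\neq 0$. Setting $u_0=0$ in \eqref{analytic_ode} with $\varphi(0)=\id$, $\varphi_t(0)=0$, the unique solution is the constant curve $\varphi\equiv\id$, so $\exp(0)=\id$. Linearizing in a direction $v\in H^k_\sigma$, the variation $\delta\varphi$ solves $\delta\varphi_{tt}=0$, $\delta\varphi(0)=0$, $\delta\varphi_t(0)=v$, so $\delta\varphi(1)=v$. Hence $d_0\exp$ is the canonical inclusion $H^k_\sigma\hookrightarrow H^k$, which is nonzero.

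Next I would show that $Z$ has empty interior. By 2D global well-posedness, $\exp$ is analytic on the entire connected Banach space $H^k_\sigma(\mathbb T^2;\R^2)$, so its differential $d\exp$, viewed as a map into the operator space $L(H^k_\sigma(\mathbb T^2;\R^2),\,H^k(\mathbb T^2;\R^2))$, is also analytic. If $Z$ contained an open ball, then at every point of that ball all Taylor coefficients of $d\exp$ would vanish; since the locus where all derivatives vanish is both open (by local Taylor expansion) and closed (by continuity of derivatives), connectedness of $H^k_\sigma$ would force $d\exp\equiv 0$, contradicting the previous paragraph. Therefore $Z^c$ is open (by continuity of $d\exp$) and dense in $H^k_\sigma$.

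Finally I would take $S:=Z^c\cap C^\infty_\sigma(\mathbb T^2;\R^2)$. Given any $u\in H^k_\sigma$ and $\varepsilon>0$, openness and density of $Z^c$ provide a point $u'$ and a ball $B_\delta(u')\subseteq Z^c\cap B_\varepsilon(u)$, and density of smooth divergence-free fields in $H^k_\sigma$ supplies an element of $C^\infty_\sigma$ inside $B_\delta(u')$, which hence lies in $S\cap B_\varepsilon(u)$. So $S$ is dense, consists of smooth vector fields, and $d_{u_\bullet}\exp\neq 0$ for each $u_\bullet\in S$. The main obstacle is justifying the identity-principle step: one must confirm that $d\exp$ is genuinely analytic with values in the operator Banach space $L(H^k_\sigma,H^k)$ and invoke the Banach-valued identity theorem on a connected domain, both of which are standard but nontrivial inputs from the infinite-dimensional analytic calculus underlying Section \ref{section_geometry}.
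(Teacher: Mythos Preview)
Your argument is correct, but it takes a different route from the paper's. The paper avoids the Banach-valued identity theorem entirely: given any smooth $u_\bullet$, it fixes a single direction $w\in H^k_\sigma$ and a point $x^\ast$ with $w(x^\ast)\neq 0$, and then looks at the \emph{scalar} real-analytic function $\gamma(t)=(d_{tu_\bullet}\exp(w))(x^\ast)$ on $[0,1]$. Since $d_0\exp=\mathrm{id}$ gives $\gamma(0)\neq 0$, the one-variable identity principle yields $t_n\uparrow 1$ with $\gamma(t_n)\neq 0$, and one places all the points $t_n u_\bullet$ into $S$. This is more elementary (only one-dimensional analyticity is used), and it has the side benefit of directly producing, for each $u_\bullet\in S$, an explicit pair $(w_\ast,x^\ast)$ with $(d_{u_\bullet}\exp(w_\ast))(x^\ast)\neq 0$, which is exactly the data consumed in \eqref{w_ast}. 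Your approach, by contrast, yields the stronger topological statement that $\{d_{u_\bullet}\exp\neq 0\}$ is open and dense, at the cost of invoking the full identity theorem for analytic maps into the operator space $L(H^k_\sigma,H^k)$---precisely the nontrivial input you flagged at the end.
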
 

\begin{proof}
Take an arbitrary $u_\bullet \in C^\infty$. Take $w \in H^k_\sigma(\mathbb T^2;\R^2)$ and $x^\ast \in \mathbb T^2$ with $w(x^\ast) \neq 0$. Consider the analytic curve
\[
 \gamma:[0,1] \to \R^2,\quad t \mapsto (d_{t u_\bullet}\exp(w))(x^\ast)
\] 
As $d_0 \exp = \mbox{id}$ (see \cite{lang}), we have $\gamma(0)=w(x^\ast) \neq 0$. Because of analyticity, we get infinitely many $t_n \uparrow 1$ with $(d_{t_n u_\bullet}\exp(w^\ast))(x^\ast) \neq 0$. Thus we can put all these $t_n u_\bullet$ into $S$. This construction gives a dense subset $S$ consisting of $C^\infty$ vector-fields.
\end{proof}

\begin{proof}[Proof of Proposition \ref{prop_nonuniform}]
Let $u_\bullet \in S$ be as in Lemma \ref{lemma_dense} with a corresponding $x^\ast \in \mathbb T^2$ and $w_\ast \in H^k_\sigma(\mathbb T^2;\R^2)$ such that
\begin{equation}\label{w_ast}
 m:=|(d_{u_\bullet}\exp(w_\ast))(x^\ast)| \neq 0 .
\end{equation}
In the following, we will determine a $R_\ast > 0$ and prove that
\[
 \left. \Phi \right|_{B_R(u_\bullet)}:B_R(u_\bullet) \subseteq H^k_\sigma(\mathbb T^2;\R^2) \to H^k_\sigma(\mathbb T^2;\R^2)
\]
is not uniformly continuous for any $0 < R < R_\ast$. Here $B_R(u_\bullet)$ denotes the ball of radius $R$ in $H^k_\sigma(\mathbb T^2;\R^2)$ around $u_\bullet$. As $S$ is dense in $H^k_\sigma(\mathbb T^2;\R^2)$, this clearly suffices.
First we choose $R_1 > 0$ small enough and $C_1 > 0$ with
\begin{equation}\label{below_above}
 \frac{1}{C_1} ||f||_{H^{k-1}} \leq ||f \circ \varphi^{-1}||_{H^{k-1}} \leq C_1 ||f||_{H^{k-1}}
\end{equation}
for all $f \in H^{k-1}(\mathbb T^2;\R^2)$ and for all $\varphi \in \exp(B_{R_1}(u_\bullet))$. That this is possible follows from the continuity properties of the composition -- see \cite{composition}. Using the Sobolev embedding theorem, we choose $0 < R_2 < R_1$ and $C_2 > 0$ such that
\begin{equation}\label{lipschitz}
 |\varphi(x)-\varphi(y)| \leq C_2 |x-y|
\end{equation}
for all $x,y \in \mathbb T^2$ and $\varphi \in \exp(B_{R_2}(u_\bullet))$. To make estimates around $\exp(u_\bullet)$, we use the Taylor expansion
\[
 \exp(u_\bullet+h)=\exp(u_\bullet)+d_{u_\bullet}\exp(h) + \int_0^1 (1-s) d_{u_0+sh}^2\exp(h,h) \;ds .
\]
To estimate the second derivatives in this expansion, we choose $0 < R_3 < R_2$ such that
\begin{equation}
  ||d_{\tilde u}^2 \exp(h_1,h_2)||_{H^k} \leq C_3 ||h_1||_{H^k} ||h_2||_{H^k} 
\label{SDE1}
\end{equation}
and
\begin{eqnarray}
 && ||d_{\tilde u_1}^2 \exp(h_1,h_2)-d^2_{\tilde u_2}\exp(h_1,h_2)||_{H^k} \nonumber \\ & \leq& C_3 ||\tilde u_1 - \tilde u_2||_{H^k} ||h_1||_{H^k} ||h_2||_{H^k} 
\label{SDE2}
\end{eqnarray}
for some $C_3 > 0$ and for all $\tilde u, \tilde u_1, \tilde u_2 \in B_{R_\ast}(u_\bullet)$ and all $h_1,h_2 \in H^k(\mathbb T^2;\R^2)$. Due to the smoothness of $\exp$, this is possible. Now let us
fix $C > 0$ in the Sobolev imbedding
\[
 |f(x)| \leq C ||f||_{H^k}, \quad \forall x \in \mathbb T^2
\]
for all $f \in H^k(\mathbb T^2;\R^2)$. Then we choose $0 < R_\ast < R_3$ in such a way that
\[
 ||\varphi^{-1} - \varphi_\bullet^{-1}||_{H^k} < 1 
\]
for all $\varphi \in \exp(B_{R_\ast}(u_\bullet))$, where $\varphi_\bullet=\exp(u_\bullet)$. Making $R_\ast$ smaller if necessary, we can require 
\begin{equation}\label{r_ast}
 (C C_3 R_\ast^2/4 + C C_3 R_\ast) \cdot ||w_\ast||_{H^k} < \frac{m}{4}  
\end{equation}
Finally we fix a $R$ ($0 < R < R_\ast$). Our goal is to construct two sequences of initial values
\[
 (u_0^{(n)})_{n \geq 1}, (\tilde u_0^{(n)})_{n \geq 1} \subseteq B_R(u_\bullet)
\]
such that
\[
 \lim_{n \to \infty} ||u_0^{(n)} - \tilde u_0^{(n)}||_{H^k} = 0,
\]
but
\[
 \limsup_{n \to \infty} ||\Phi(u_0^{(n)}) - \Phi(\tilde u_0^{(n)})||_{H^k} > 0
\]
which would imply that $\Phi$ is not uniformly continuous on $B_R(u_\bullet)$. Denoting by $\omega^{(n)}, \ \tilde \omega^{(n)}$ the vorticities of $\Phi(u_0^{(n)})$, $\Phi(\tilde u_0^{(n)})$ 
respectively, we have obviously
\[
 ||\omega^{(n)}-\tilde \omega^{(n)}||_{H^{k-1}} \leq \tilde C ||\Phi(u_0^{(n)}) - \Phi(\tilde u_0^{(n)})||_{H^k}
\]
for some $\tilde C > 0$. Therefore, it will be enough to establish
\[
\limsup_{n \to \infty} ||\omega^{(n)}-\tilde \omega^{(n)}||_{H^{k-1}} > 0 .
\]
Let us now construct these sequences explicitly. With $w_\ast$ and $x^\ast$ from \eqref{w_ast}, we choose for $n \geq 1$
\begin{equation}
 u_0^{(n)}=u_\bullet + v_n,\quad \tilde u_0^{(n)} = u_\bullet + v_n + \frac{1}{n} w_\ast
\label{IVS}
\end{equation}
where we pick a $v_n \in H^k_\sigma(\mathbb T^2;\R^2)$ with $||v_n||_{H^k}=R/2$ and
\[
 \operatorname{supp} v_n \subseteq B_{r_n}(x^\ast) \subseteq \mathbb T^2,\quad r_n=\frac{m}{8n C_2}
\]
where $\operatorname{supp}$ denotes the support, $B_{r_n}(x^\ast)$ is the ball in $\mathbb T^2$ of radius $r_n$ with center $x^\ast \in \mathbb T^2$, and $C_2$ is the Lipschitz constant from \eqref{lipschitz}. For some large $N$, we have that the initial values \eqref{IVS} lie in $B_R(u_\bullet)$ for $n \geq N$. Furthermore by construction
\[
 \lim_{n \to \infty} ||u_0^{(n)}-\tilde u_0^{(n)}||_{H^k} = \lim_{n \to \infty} ||\frac{1}{n} w_\ast||_{H^k} = 0 .
\]
For $n \geq N$, we introduce
\[
 \varphi^{(n)}=\exp(u_0^{(n)}),\quad \tilde \varphi^{(n)}=\exp(\tilde u_0^{(n)}) .
\]
We then have by \eqref{vorticity}
\[
 \omega^{(n)}=\omega_0^{(n)} \circ (\varphi^{(n)})^{-1},\quad \tilde \omega^{(n)}=\tilde \omega_0^{(n)} \circ (\tilde \varphi^{(n)})^{-1}
\]
where $\omega_0^{(n)}, \  \tilde \omega_0^{(n)}$ are the vorticities of $u_0^{(n)}$, $\tilde u_0^{(n)}$ respectively. So we have to estimate
\begin{equation}
 \limsup_{n \to \infty} ||\omega_0^{(n)} \circ (\varphi^{(n)})^{-1}-\tilde \omega_0^{(n)} \circ (\tilde \varphi^{(n)})^{-1}||_{H^{k-1}} .
\label{EST1}
\end{equation}
By construction, the vorticities decompose at $t=0$ to
\begin{equation}
 \omega_0^{(n)}=\omega_\bullet + \omega_n,\quad \tilde \omega_0^{(n)}=\omega_\bullet + \omega_n + \frac{1}{n} \omega_\ast .
\label{EST2}
\end{equation}
Hence we have to estimate
\begin{equation}
 \limsup_{n \to \infty} ||(\omega_\bullet + \omega_n) \circ (\varphi^{(n)})^{-1}-(\omega_\bullet + \omega_n + \frac{1}{n} \omega_\ast) \circ (\tilde \varphi^{(n)})^{-1}||_{H^{k-1}} .
\label{EST3}
\end{equation}
Clearly,
\[
(\omega_\bullet + \omega_n) \circ (\varphi^{(n)})^{-1} = \omega_\bullet  \circ (\varphi^{(n)})^{-1} + \omega_n \circ (\varphi^{(n)})^{-1},
\]
and 
\[
(\omega_\bullet + \omega_n + \frac{1}{n} \omega_\ast) \circ (\tilde \varphi^{(n)})^{-1} = 
\omega_\bullet \circ (\tilde \varphi^{(n)})^{-1} + \omega_n \circ (\tilde \varphi^{(n)})^{-1} +
\frac{1}{n} \omega_\ast \circ (\tilde \varphi^{(n)})^{-1} .
\]
We have
\begin{eqnarray}
& & ||(\omega_\bullet + \omega_n) \circ (\varphi^{(n)})^{-1}-(\omega_\bullet + \omega_n + \frac{1}{n} \omega_\ast) \circ (\tilde \varphi^{(n)})^{-1}||_{H^{k-1}} \nonumber \\
& & \geq ||\omega_n \circ (\varphi^{(n)})^{-1}-\omega_n \circ (\tilde \varphi^{(n)})^{-1}||_{H^{k-1}} \nonumber \\
& & - ||\omega_\bullet \circ (\varphi^{(n)})^{-1}-\omega_\bullet \circ (\tilde \varphi^{(n)})^{-1}||_{H^{k-1}} - ||  \frac{1}{n} \omega_\ast \circ (\tilde \varphi^{(n)})^{-1}||_{H^{k-1}} .
\label{EST4}
\end{eqnarray}
First we estimate
\[
 || \omega_\bullet  \circ (\varphi^{(n)})^{-1}-\omega_\bullet \circ (\tilde \varphi^{(n)})^{-1}
||_{H^{k-1}}.
\]
This estimate turns out to be the most challenging in 3D due to the fluid particle deformation factor in front the vorticity, and is still elusive. In the 2D case, 
we can estimate this (see \cite{composition}) by
\[
 ||\omega_\bullet  \circ (\varphi^{(n)})^{-1}-\omega_\bullet \circ (\tilde \varphi^{(n)})^{-1}||_{H^{k-1}} \leq \tilde K ||\omega_\bullet||_{H^k} ||(\varphi^{(n)})^{-1}-(\tilde \varphi^{(n)})^{-1}||_{H^{k-1}} .
\]
As $\omega_\bullet$ is fixed and smooth, its $H^k$ norm is bounded. By the Sobolev imbedding we know that
\[
 \varphi^{(n)}-\tilde \varphi^{(n)} \to 0 
\]
in $C^1$, and by the choice of $R_\ast$ we know that the $C^1$ norms of their inverses are bounded, thus
\[
 (\varphi^{(n)})^{-1}-(\tilde \varphi^{(n)})^{-1} \to 0
\] 
uniformly and therefore also in $L^2$. Since the inverses are bounded in $H^k$, we get, by interpolation, convergence to 0 in $H^{k-1}$. For the $\omega_\ast$ term, we get by \eqref{below_above} that
\[
 ||\frac{1}{n} \omega_\ast \circ (\tilde \varphi^{(n)})^{-1}||_{H^{k-1}} \leq \frac{C_1}{n} ||\omega_\ast||_{H^{k-1}} \to 0
\]
as $n \to \infty$. Thus from \eqref{EST1}-\eqref{EST4}, we arrive at
\begin{align*}
 &\limsup_{n \to \infty} ||\omega_0^{(n)} \circ (\varphi^{(n)})^{-1}-\tilde \omega_0^{(n)} \circ (\tilde \varphi^{(n)})^{-1}||_{H^{k-1}}\\
 &=
 \limsup_{n \to \infty} ||\omega_n \circ (\varphi^{(n)})^{-1}-\omega_n \circ (\tilde \varphi^{(n)})^{-1}||_{H^{k-1}}.
\end{align*}
We claim that the supports of the latter terms are disjoint. In order to prove this, we will estimate the ``distance'' between $\varphi^{(n)}$ and $\tilde \varphi^{(n)}$. Using the Taylor expansion we have
\begin{align*}
 \tilde \varphi^{(n)}& = \exp(u_\bullet + v_n + \frac{1}{n}w_\ast) = \varphi_\bullet + d_{u_\bullet}\exp(v_n+\frac{1}{n}w_\ast)  \\
 &+\int_0^1 (1-s) d^2_{u_\bullet+s(v_n+\frac{1}{n}w_\ast)}\exp(v_n+\frac{1}{n}w_\ast,v_n+\frac{1}{n}w_\ast) \;ds 
\end{align*}
and
\begin{align*}
 \varphi^{(n)}& = \exp(u_\bullet + v_n) \\ &= \varphi_\bullet + d_{u_\bullet}\exp(v_n) +\int_0^1 (1-s) d^2_{u_\bullet+s v_n}\exp(v_n,v_n) \;ds .
\end{align*}
We thus have
\[
 \tilde \varphi^{(n)}-\varphi^{(n)}=\frac{1}{n} d_{u_\bullet}\exp(w_\ast) + I^{(n)}_1 + I^{(n)}_2 + I^{(n)}_3
\]
where 
\[
 I^{(n)}_1=\int_0^1 (1-s) \left(d^2_{u_\bullet+s(v_n+\frac{1}{n}w_\ast)}\exp(v_n,v_n)-d^2_{u_\bullet+s v_n}\exp(v_n,v_n)\right) \;ds
\]
and
\[
 I^{(n)}_2=2 \int_0^1 (1-s) d^2_{u_\bullet+s(v_n+\frac{1}{n}w_\ast)}\exp(v_n,\frac{1}{n}w_\ast) \;ds
\]
and
\[
 I^{(n)}_3=\int_0^1 (1-s) d^2_{u_\bullet+s(v_n+\frac{1}{n}w_\ast)}\exp(\frac{1}{n}w_\ast,\frac{1}{n}w_\ast) \;ds .
\]
Using the estimates \eqref{SDE1}-\eqref{SDE2} for the second derivatives, we have
\begin{eqnarray*}
&& ||I^{(n)}_1||_{H^k} \leq \frac{C_3 R^2}{4n} ||w_\ast||_{H^k},\quad ||I^{(n)}_2||_{H^k} \leq \frac{C_3 R}{n} ||w_\ast||_{H^k},
\\ && ||I^{(n)}_3||_{H^k} \leq \frac{C_3}{n^2} ||w_\ast||^2_{H^k} .
\end{eqnarray*}
Hence using the Sobolev imbedding and the choice of $R_\ast$ in \eqref{r_ast}, we have
\begin{align*}
 &|I^{(n)}_1(x^\ast)|+|I^{(n)}_2(x^\ast)|+|I^{(n)}_3(x^\ast)| \\
 &\leq \frac{C C_3 R^2}{4n} ||w_\ast||_{H^k} + \frac{C C_3 R}{n} ||w_\ast||_{H^k} + \frac{C C_3}{n^2} ||w_\ast||^2_{H^k} < \frac{m}{2n}
\end{align*}
for $n \geq N'$ (for some large $N'$), where $m$ is the one from \eqref{w_ast}. Using the triangle inequality, we get
\[
 |\tilde \varphi^{(n)}(x^\ast)-\varphi^{(n)}(x^\ast)| > \frac{1}{n} |d_{u_\bullet} \exp(w_\ast)| - \frac{m}{2n} = \frac{m}{2n} .
\]
Since the supports satisfy
\[
 \operatorname{supp} \omega_n \circ (\varphi^{(n)})^{-1} \subseteq B_{C_2 r_n}(\varphi^{(n)}(x^\ast)) = B_{m/8n}(\varphi^{(n)}(x^\ast))
\]
and
\[
  \operatorname{supp} \omega_n \circ (\tilde \varphi^{(n)})^{-1} \subseteq B_{C_2 r_n}(\tilde \varphi^{(n)}(x^\ast)) = B_{m/8n}(\tilde \varphi^{(n)}(x^\ast)),
\]
we see that their supports are disjoint. We thus can ``separate'' the norms
\begin{align*}
 &\limsup_{n \to \infty} ||\omega_n \circ (\varphi^{(n)})^{-1}-\omega_n \circ (\tilde \varphi^{(n)})^{-1}||^2_{H^{k-1}} \\ 
 & =\limsup_{n \to \infty} (|| \omega_n \circ (\varphi^{(n)})^{-1}||^2_{H^{k-1}} + ||\omega_n \circ (\tilde \varphi^{(n)})^{-1}||^2_{H^{k-1}})\\
 &\geq \limsup_{n \to \infty} \frac{2}{C_1^2} ||\omega_n||^2_{H^{k-1}}
\end{align*}
where we used \eqref{below_above} in the last step. Now note that $||v_n||_{H^k}=R/2$ is fixed whereas its support goes to zero. In particular, we have $||v_n||_{L^2} \to 0$ because
\[
 \int_{\mathbb T^2} |v_n(x)|^2 \leq C^2 ||v_n||^2_{H^k} \cdot \operatorname{vol}(B_{r_n}(x^\ast)) \to 0
\]
as $n \to \infty$. Hence for $n \to \infty$, we have $||v_n||_{H^k} \sim ||\nabla v_n||_{H^{k-1}}$ or more precisely
\[
\limsup_{n \to \infty} ||\nabla v_n||_{H^{k-1}} \geq \hat C_1 ||v_n||_{H^k}
\] 
for some $\hat C_1 > 0$. By the Biot Savart Law
\[
 \limsup_{n \to \infty} ||\omega_n||_{H^{k-1}} \geq \hat C_2  \limsup_{n \to \infty} ||\nabla v_n||_{H^{k-1}} \geq \hat C_1 \hat C_2 R ,
\]
for some $\hat C_2 >0$. Combining everything, we end up with
\begin{equation}\label{linear_estimate}
\limsup_{n \to \infty} ||\Phi(u^{(n)}_0)-\Phi(\tilde u^{(n)}_0)||_{H^k} \geq C_\ast R
\end{equation}
for some $C_\ast > 0$. Note that $C_\ast$ is independent of $R$ for $0 < R < R_\ast$. The proof of the
Proposition is complete.
\end{proof}

\section{Nowhere-differentiability of the solution map}\label{section_diff}

Now we prove that the time $T=1$ solution map 
\[
 \Phi: H^k_\sigma(\mathbb T^2;\R^2) \to H^k_\sigma(\mathbb T^2;\R^2)
\]
is nowhere Fr\'echet differentiable.

\begin{Prop}\label{prop_diff}
The map $\Phi$ is nowhere Fr\'echet differentiable.
\end{Prop}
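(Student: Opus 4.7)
The plan is to reduce Fr\'echet non-differentiability to the quantitative bound \eqref{linear_estimate} of Proposition \ref{prop_nonuniform}, which produces, for any $u_\bullet \in S$ and any $0 < R < R_\ast(u_\bullet)$, sequences $u_0^{(n)}, \tilde u_0^{(n)} \in B_R(u_\bullet)$ with $||u_0^{(n)} - \tilde u_0^{(n)}||_{H^k} \to 0$ and $\limsup_n ||\Phi(u_0^{(n)}) - \Phi(\tilde u_0^{(n)})||_{H^k} \geq C_\ast(u_\bullet)\, R$, where $C_\ast(u_\bullet) > 0$ is independent of $R$. Suppose for contradiction that $\Phi$ is Fr\'echet differentiable at some $u_0 \in H^k_\sigma(\mathbb T^2;\R^2)$ with derivative $D$ of operator norm $M$. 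Then for each $\epsilon > 0$, there exists $\rho(\epsilon) > 0$ (which we may shrink at will) such that, whenever $||h||_{H^k},||\tilde h||_{H^k} < \rho(\epsilon)$,
\[
||\Phi(u_0+h) - \Phi(u_0+\tilde h)||_{H^k} \leq M ||h-\tilde h||_{H^k} + \epsilon\bigl(||h||_{H^k} + ||\tilde h||_{H^k}\bigr).
\]

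To import Proposition \ref{prop_nonuniform}'s construction into the $\rho(\epsilon)$-ball around $u_0$, I would invoke the density of $S$: for each small $\epsilon$, pick $u_\bullet(\epsilon) \in S$ with $||u_\bullet(\epsilon)-u_0||_{H^k} < \rho(\epsilon)/4$, then apply the proposition at $u_\bullet(\epsilon)$ with radius $R = \rho(\epsilon)/4$ (shrinking $\rho(\epsilon)$ further if needed so that $R < R_\ast(u_\bullet(\epsilon))$). The resulting sequences satisfy $||u_0^{(n)}-u_0||_{H^k},||\tilde u_0^{(n)}-u_0||_{H^k} \leq \rho(\epsilon)/2 < \rho(\epsilon)$, so the differentiability estimate applies; taking $\limsup_n$ and using $||u_0^{(n)} - \tilde u_0^{(n)}||_{H^k} \to 0$ collapses the right-hand side to at most $\epsilon\,\rho(\epsilon)$, while \eqref{linear_estimate} supplies the lower bound $C_\ast(u_\bullet(\epsilon))\,\rho(\epsilon)/4$. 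Rearranging yields $C_\ast(u_\bullet(\epsilon)) \leq 4\epsilon$ for every small $\epsilon$.

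To reach the contradiction as $\epsilon \to 0$, I would show that $C_\ast(u_\bullet)$ is bounded below by some $c_0 > 0$ uniformly for $u_\bullet \in S$ in a fixed $H^k$-neighborhood of $u_0$. Inspecting the proof of Proposition \ref{prop_nonuniform}, $C_\ast(u_\bullet)$ is a ratio of the universal Biot-Savart and Sobolev constants $\hat C_1, \hat C_2, \tilde C$ with the composition-operator constant $C_1(u_\bullet)$ from \eqref{below_above}; since $C_1(u_\bullet)$ depends continuously on $u_\bullet$ through $\varphi_\bullet = \exp(u_\bullet) \in \Ds^k$, the function $C_\ast(\cdot)$ extends continuously across $u_0$ with positive value, furnishing the uniform lower bound. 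Choosing $\epsilon < c_0/4$ then violates $C_\ast(u_\bullet(\epsilon)) \leq 4\epsilon$, so $\Phi$ cannot be Fr\'echet differentiable at $u_0$. The main technical delicacy I anticipate is the bookkeeping when $u_0 \notin S$: along any sequence $u_\bullet \to u_0$ in $S$ the quantity $m(u_\bullet) = |d_{u_\bullet}\exp(w_\ast)(x^\ast)|$ may tend to zero, forcing $R_\ast(u_\bullet) \to 0$; however, since the scheme above only requires $R$ to be small (not bounded below) and since $C_\ast$ itself does not involve $m$, this degeneration is harmless, and the uniform lower bound on $C_\ast$ suffices to drive the contradiction.
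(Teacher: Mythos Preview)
Your approach is correct and rests on the same mechanism as the paper's: the quantitative lower bound \eqref{linear_estimate} is incompatible with the remainder estimate coming from Fr\'echet differentiability. The paper's execution is more economical, however. Rather than sending $\epsilon \to 0$, choosing a family $u_\bullet(\epsilon)$, and then arguing that $C_\ast(\cdot)$ is locally bounded below, the paper fixes a \emph{single} $u_\bullet \in S$ near $u_0$ --- asserting that $R_\ast$ can be taken locally uniformly, so that $u_0 \in B_{R_\ast}(u_\bullet)$ --- and hence a single value of $C_\ast$. Fr\'echet differentiability is then invoked just once, with remainder bound $\tfrac{C_\ast}{4}\|\tilde u_0 - u_0\|_{H^k}$ on some $B_\delta(u_0)$; the sequences from Proposition~\ref{prop_nonuniform}, placed inside $B_\delta(u_0)$, give $\limsup \geq C_\ast \delta$ against the differentiability bound $\leq \tfrac{C_\ast}{2}\delta$, an immediate contradiction. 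This route sidesteps your continuity analysis of $C_\ast$ entirely. The delicacy you flag at the end --- that $R_\ast(u_\bullet)$ might collapse as $u_\bullet \to u_0$ --- is precisely what the paper's ``locally uniform $R_\ast$'' claim is meant to absorb; both arguments ultimately lean on this point, and your scheme does not truly circumvent it, since after shrinking $\rho(\epsilon)$ to enforce $R < R_\ast(u_\bullet(\epsilon))$ you would have to re-select $u_\bullet(\epsilon)$ and face the same constraint again.
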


\begin{proof}
The proof is based on estimate \eqref{linear_estimate}. In the following, we will see that differentiability prevents such an estimate. Take $u_0 \in H^k_\sigma(\mathbb T^2;\R^2)$ and a ball $B \subseteq H^k_\sigma(\mathbb T^2;\R^2)$ around $u_0$ with an estimate as in \eqref{linear_estimate}. To be precise, take $u_\bullet \in S$ near $u_0$ and determine $R_\ast$ and $C_\ast$. A careful examination shows that the choice of $R_\ast$ can be made locally uniformly. Thus there will be a ball $B_{R_\ast}(u_\bullet)$ covering $u_0$. Now assume that $\Phi$ is Fr\'echet differentiable at $u_0$, i.e. for $\tilde u_0$ in a neighborhood of $u_0$, we have 
\[
 \Phi(\tilde u_0)=\Phi(u_0)+d_{u_0}\Phi(\tilde u_0-u_0) + r(\tilde u_0)
\]
with $||r(\tilde u_0)||_{H^k} \leq \frac{C_\ast}{4} ||\tilde u_0 - u_0||_{H^k}$ for $||\tilde u_0 - u_0||_{H^k} \leq \delta$ for some $\delta > 0$ small enough. As we have seen above, we can construct two sequences
\[
 (u_0^{(n)})_{n \geq 1}, (\tilde u_0^{(n)})_{n \geq 1} \subseteq B_\delta(u_0) 
\]
with $||u_0^{(n)}-\tilde u_0^{(n)}||_{H^k} \to 0$ for $n \to \infty$ and
\[
 \limsup_{n \to \infty} ||\Phi(u_0^{(n)})-\Phi(\tilde u_0^{(n)})||_{H^k} \geq C_\ast \delta .
\]
Applying differentiability gives
\[
 \Phi(u_0^{(n)})-\Phi(\tilde u_0^{(n)})=d_{u_0}\Phi(u_0^{(n)}-\tilde u_0^{(n)}) + r(u_0^{(n)})-r(\tilde u_0^{(n)})
\]
which gives the contradiction
\begin{eqnarray*}
&& \limsup_{n \to \infty} ||\Phi(u_0^{(n)})-\Phi(\tilde u_0^{(n)})||_{H^k} \\ && \leq \limsup_{n \to \infty} \left(||r(u_0^{(n)})||_{H^k} + ||r(\tilde u_0^{(n)})||_{H^k}\right) \leq \frac{C_\ast}{2} \delta .
\end{eqnarray*}
Hence $\Phi$ cannot be differentiable at $u_0$. The proof is complete.
\end{proof}
By now, the proof of the main theorem is complete. \\ 

{\bf Acknowledgement}: We would like to thank Professors Dong Li and Jiahong Wu for helpful communication.

\bibliographystyle{plain}

\end{document}